
\documentclass[12pt]{amsart}

\addtolength{\textwidth}{1in}
\addtolength{\evensidemargin}{-.5in}
\addtolength{\oddsidemargin}{-.5in}

\usepackage{amsmath,amsthm,amsfonts,amscd,amssymb,euscript}

\swapnumbers

\newtheorem{Lemma}{Lemma}
\newtheorem{Theorem}[Lemma]{Theorem}
\newtheorem{Proposition}[Lemma]{Proposition}

\newtheorem*{Theorem*}{Lemma}
\theoremstyle{remark}
\newtheorem{Remark}[Lemma]{Remark}

\newtheorem*{Remark*}{Remark}
\theoremstyle{definition}
\newtheorem{Definition}[Lemma]{Definition}

\numberwithin{Lemma}{section}

\numberwithin{equation}{section}



\newcommand{\R}{\ensuremath{{\mathbb R}}}
\newcommand{\N}{\ensuremath{{\mathbb N}}}

\newcommand{\E}{\ensuremath{\mathcal E}}
\newcommand{\F}{\ensuremath{\mathcal F}}
\newcommand{\oka}{\ensuremath{\mathcal O}}

\newcommand{\m}{\ensuremath{\mathfrak m}}



%
%
%

\begin{document}

\title[Weierstrass Preparation Theorem]{Failure of the Weierstrass Preparation Theorem in quasi-analytic Denjoy-Carleman rings}

\author[F. Acquistapace, F. Broglia]{Francesca Acquistapace, Fabrizio Broglia}

\address{Dipartimento di Matematica, Universit\`a degli Studi di Pisa, Largo Bruno Pontecorvo, 5, 56127 Pisa, Italy}

\email{acquistf@dm.unipi.it, broglia@dm.unipi.it}

\author[M. Bronshtein]{Michail Bronshtein}

\address{Department of Mathematics, Kazan National Research Technological University, 68 Karl Marx St., Kazan 420015, Republic of Tatarstan, Russian Federation}

\email{bronmich@gmail.com}

\author[A. Nicoara]{Andreea Nicoara}

\address{Department of Mathematics, University of Pennsylvania, 209 South $33^{rd}$ St.,  Philadelphia, PA 19104, United States}

\email{anicoara@math.upenn.edu}

\author[N. Zobin]{Nahum Zobin}

\address{Department of Mathematics, College of William and Mary, P.O. Box 8795, Williamsburg, VA 23187-8795,  United States}

\email{nxzobi@wm.edu}

\thanks{Much of the work presented in this paper was done while the authors were visiting the American Institute of Mathematics, Palo Alto, CA for a SQuaRE workshop. The authors wish to express their gratitude to AIM for the hospitality and financial support.}

\subjclass[2010]{Primary 26E10; 32B05; Secondary 46E25.}

\keywords{Denjoy-Carleman quasianalytic classes, extension of Denjoy-Carleman functions, Weierstrass Preparation Theorem, Weierstrass Division}

\begin{abstract}
It is shown that Denjoy-Carleman quasi-analytic rings of germs of functions in two or more variables fail to satisfy the Weierstrass Preparation Theorem. The result is proven via a non-extension theorem.
\end{abstract}

\maketitle

\tableofcontents
\parindent = 0 pt

\section{Introduction}

Denjoy-Carleman rings of infinitely differentiable functions have been classically studied in Partial Differential Equations, Function Theory and other fields of Analysis. Since the mid 1970's, their algebraic structure, which is necessary for understanding the geometry of zero sets of Denjoy-Carleman functions, has also been attracting quite a lot of attention.
\medskip

\medskip
The technical complication for Denjoy-Carleman quasi-analytic classes of functions is that applying Weierstrass division to a function in a Denjoy-Carleman class might produce a quotient and a remainder outside that class as shown by Childress \cite{childress} in 1976. Since Weierstrass division is the standard method via which the Weierstrass Preparation Theorem is proven, the Weierstrass Preparation Theorem became problematic as well as difficult to establish in the wake of Childress' result. Most experts expected it to fail, but a counterexample turned out to be elusive to construct. Partial results on Weierstrass division in the Denjoy-Carleman classes due to Childress, Chaumat, and Chollet gave a glimpse into what a counterexample to the Weierstrass Preparation Theorem needed to be. A function has the Weierstrass division property if it divides any other function in the ring, and the quotient and remainder of that division are inside the ring. A very elementary argument then shows that every function that has the Weierstrass division property satisfies the Weierstrass Preparation Theorem. By Childress' work in 1976 \cite{childress} (one direction of the equivalence) and Chaumat-Chollet's work in 2004 \cite{chaumatchollet} (the other direction), a normalized Weierstrass polynomial has the Weierstrass division property in a Denjoy-Carleman ring iff it is hyperbolic, i.e. it only has real roots. This indicated that if the Weierstrass Preparation Theorem failed, it did so on functions that did not have the Weierstrass division property, of which there existed some by Childress' work.

\medskip
Further complications, however, arise for two reasons. First of all, failure of Weierstrass division seemingly does not imply the failure of the Weierstrass Preparation Theorem, i.e. these two conditions are not equivalent unlike in the classically known cases of holomorphic and real analytic germs. In fact, we are not aware of any example of a ring of smooth functions for which the Weierstrass Preparation Theorem holds, but the Weierstrass  division fails. With respect to this question of the existence of Weierstrass division, of the Weierstrass Preparation Theorem, and of their relationship for subrings of formal power series, A'Campo gave an interesting treatment in \cite{acampo}. Second of all, standard methods in commutative algebra do not yield any information about the Weierstrass Preparation Theorem.

\medskip
 By contrast, non-quasi-analytic Denjoy-Carleman rings behave much more like rings of smooth functions in the sense that they possess analogs of the Weierstrass (Malgrange) Preparation and Division Theorems as shown in Bronshtein \cite{bronshteinnext}. In establishing such properties of non-quasi-analytic classes, a very important role is played by the fact that functions from these classes can be extended  with a {\em controlled widening} of the class. Various extension results for non-quasi-analytic Denjoy-Carleman classes were proven by Carleson \cite{carleson}, Mityagin \cite{mityagin}, Ehrenpreis \cite{ehrenpreis}, Wahde \cite{wahde}, Zobin \cite{zobin, zobin2} as well as many other authors.

\medskip
In the case of quasi-analytic classes, however, there are important  non-extension results, due to Carleman \cite{carlemanb}, Lyubich-Tkachenko \cite{lyubichtkachenko}, and, more recently, Langenbruch \cite{langenbruch} and Thilliez \cite{thilliez}; see also Nowak \cite{nowakcounter}. 
When working on the problems discussed in this article we discovered that the failure of the Weierstrass Preparation Theorem for a quasi-analytic ring (even if we allow any quasi-analytic widening of this ring) follows (and is actually more or less equivalent) to the failure of the extension property for such classes. Unlike these previous results, we prove a different non-extension theorem by producing a  very  explicit  example of non-extendable function with important additional properties that are potent enough to permit contradicting the Weierstrass Preparation Theorem. Therefore, the main result of this paper is the following:

\medskip
\begin{Theorem}
\label{wpt}
The Weierstrass Preparation Theorem does not hold in any quasi-analytic class (strictly containing the real analytic class)  in dimension $\geq 2.$ Moreover, it does not hold even if we allow the unit and the distinguished polynomial to be in any wider quasi-analytic class.
\end{Theorem}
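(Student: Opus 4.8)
The plan is to prove the theorem by establishing its contrapositive link to an extension statement, as the introduction suggests the authors intend. Since a normalized Weierstrass polynomial has the Weierstrass division property if and only if it is hyperbolic (by the cited work of Childress and Chaumat--Chollet), the Weierstrass Preparation Theorem can only fail on functions whose associated distinguished polynomial is \emph{not} hyperbolic, i.e.\ possesses genuinely complex roots. So first I would fix a quasi-analytic Denjoy--Carleman class strictly larger than the real analytic germs and, working in two variables $(z,w)$ (the higher-dimensional case following by treating extra variables as parameters), I would exhibit an explicit germ $f$ in the class that is regular of some order $d$ in $w$ but whose Weierstrass polynomial, were it to exist in any quasi-analytic widening, would have a non-real root varying with $z$.

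Second, the heart of the argument is to convert the hypothetical Weierstrass factorization $f = u \cdot P$, with unit $u$ and distinguished polynomial $P$ lying in a possibly wider quasi-analytic class, into a restriction-and-extension statement. The key observation is that the roots of $P$ in $w$ are algebraic functions of the coefficients, which themselves are Denjoy--Carleman functions of $z$; a complex root $\varphi(z)$ then furnishes, via substitution into $u$ or into auxiliary data built from $f$, a one-variable quasi-analytic germ defined on a real curve that would have to extend quasi-analytically to a full neighborhood in a complexified direction. I would design $f$ so that this forced extension is precisely the non-extendable function whose construction is promised in the introduction. The real content is therefore packaged into the \textbf{non-extension theorem}: one produces a very explicit quasi-analytic function of one variable, with prescribed growth of derivatives, that admits no quasi-analytic extension (in any widening of the class) across the relevant boundary.

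The main obstacle, which I expect to absorb most of the work, is constructing this non-extendable germ with \emph{all} the additional structural properties needed to feed back into the Weierstrass machinery: it must not only fail to extend, but must do so robustly enough that no choice of unit and distinguished polynomial in \emph{any} wider quasi-analytic class can repair the factorization. This quantifier over all widenings is delicate, because a larger class has more functions available both to serve as the unit $u$ and as a potential extension; the construction must therefore control the Denjoy--Carleman weight sequence carefully so that the obstruction to extension survives any admissible enlargement. I would handle this by making the growth of the derivatives of the boundary germ intrinsic to the logarithmic-integral (quasi-analyticity) condition itself, so that extendability into the complex direction would contradict the Denjoy--Carleman quasi-analyticity criterion no matter how the weight sequence is widened.

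Finally, I would assemble the pieces: assuming the Weierstrass Preparation Theorem held for $f$ in some quasi-analytic widening, I would extract the forbidden extension from the factorization via the root $\varphi(z)$, apply the non-extension theorem to derive a contradiction, and conclude. The reduction from dimension $n \geq 2$ to the two-variable model, and the verification that $f$ genuinely lies in the original class (not merely a widening), are the routine bookkeeping steps I would defer to lemmas.
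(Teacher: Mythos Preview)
Your high-level architecture is right: the failure of Weierstrass Preparation is deduced from a one-variable non-extension theorem, and the hard analytic work is indeed the explicit construction of that non-extendable germ. But the \emph{bridge} you describe between the two---substituting a complex root $\varphi(z)$ of the distinguished polynomial into $u$ or into data built from $f$, and thereby forcing an extension ``in a complexified direction''---is not a workable mechanism and is not what the paper does. Denjoy--Carleman germs are real $C^\infty$ objects with no canonical complex extension, so plugging a complex-valued $\varphi(z)$ into them is undefined; and the non-extension theorem in question is purely real (extension from $[0,\infty)$ to a full neighborhood of $0$ in $\R$), not an obstruction to complexification. The Childress/Chaumat--Chollet hyperbolicity criterion, which you place at the center of the argument, plays no role in the actual proof; it is background motivation only.

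The paper's bridge is much more direct and avoids complex substitution entirely. One takes the non-extendable one-variable germ $f$ (with $f(0)=0$, $f'(0)>0$, $f|_{[0,\infty)}\in C^M$, $f$ in some quasi-analytic class on all of $\R$, but $f\notin C^{\widetilde M}$ on any full neighborhood of $0$) and sets
\[
g(t,x)=f(t^2)-x.
\]
Because $t^2\ge 0$, the germ $g$ lies in $C^M_{2,0}$, and it is regular of order $2$ in $t$. The formal Weierstrass polynomial is forced by the evenness $g(t,x)=g(-t,x)$ to be $P(t,x)=t^2-a(x)$, and comparing with $f(t^2)-x=0$ gives $a=(T_0 f)^{-1}$. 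If $g$ could be prepared in $C^{\widetilde M}_{2,0}$ then $a\in C^{\widetilde M}_{1,0}$, and since the Inverse Function Theorem holds in quasi-analytic classes, $f\in C^{\widetilde M}_{1,0}$---contradiction. No complex root is ever evaluated; the coefficient $a(x)$ is read off directly. The ``extension'' being blocked is the real one: $g$ only sees $f$ on $[0,\infty)$ through the substitution $t\mapsto t^2$, while the Weierstrass coefficient $a=f^{-1}$ requires $f$ on a full two-sided neighborhood. Your outline should be reorganized around this device.
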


\medskip
The main theorem is proven via the following non-extension result:

\medskip
\begin{Theorem}\label{misha}
Let  $M=\{m_n\}$ and $\widetilde  M=\{\widetilde m _n\}$ be two sequences.
 Then  there exists  a
  function $f$   with the following properties:
\begin{enumerate}
\item $f\in C^{M}[0,\infty)$.
\item $\forall\, x\in \R,\, \forall\, p\in \N \quad \left| f^{(p)}(x)\right|\leq p!\,{2^{p+1}}\,\widetilde   m_p ,$ so in particular, $f\in C^{\widetilde M}(\R)$.
\item There exists a sequence $ \{x_p\}\subset \R$ with $ x_p <0$ and $\displaystyle\lim_{p\to \infty} x_p=0$  and a sequence $\{n_p\} \subset \N,$ $\displaystyle\lim_{p\to \infty} n_p=\infty$ such that
$$ \forall\, p\quad  \left| f^{(n_p)}(x_p)\right| \geq  n_p!\,\widetilde   m_{n_p}.$$
\item $f$ is analytic outside of any neighborhood of \, $0$.

\end{enumerate}

\end{Theorem}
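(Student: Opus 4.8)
The plan is to construct $f$ as a superposition of elementary analytic ``bumps,'' each a rational function whose complex poles sit just off the negative real axis and march toward the origin. Concretely, I would fix parameters $\alpha_p<0$, $r_p>0$, $c_p>0$ with $\alpha_p\to 0^-$ and $r_p\to 0$, and set
\[
f(x)=\sum_{p} g_p(x),\qquad g_p(x)=\frac{c_p\,r_p}{(x-\alpha_p)^2+r_p^2}=c_p\,\mathrm{Im}\,\frac{1}{(\alpha_p-x)+ir_p}.
\]
Each $g_p$ is real-analytic on $\R$ with nearest complex singularities at $\alpha_p\pm ir_p$, so its radius of analyticity at the center $\alpha_p$ is exactly $r_p$. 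The Cauchy-type bound $|g_p^{(k)}(x)|\le c_p\,k!\,\big((x-\alpha_p)^2+r_p^2\big)^{-(k+1)/2}$ drives the whole proof: near the center the relevant scale is $r_p$ (small, hence large derivatives), whereas seen from $[0,\infty)$ the scale is the separation $|\alpha_p|$ (large, hence small derivatives). Each bump is responsible for exactly one saturation point $x_p$ of property (3).

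For the parameters I would first choose orders $n_p\to\infty$ (a sufficiently sparse sequence, pinned down below) and then tie $r_p,c_p$ to $\widetilde M$ so that the $n_p$-th derivative is dominant. Using log-convexity of $\widetilde M$, the map $k\mapsto c_p\,r_p^{-(k+1)}/\widetilde m_k$ is log-concave, so setting $1/r_p\approx \widetilde m_{n_p+1}/\widetilde m_{n_p}$ places its single peak at $k=n_p$, and choosing $c_p\approx \widetilde m_{n_p}\,r_p^{\,n_p+1}$ normalizes that peak to $\approx 1$. Thus a single bump already satisfies $c_p\,r_p^{-(k+1)}\lesssim \widetilde m_k$ for all $k$, with near-equality at $k=n_p$. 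The separations are chosen purely to control the view from the right: I would take $|\alpha_p|\to 0$ but with $|\alpha_p|/r_p\gtrsim (\widetilde m_{n_p}/m_{n_p})^{1/n_p}$, and select $\{n_p\}$ sparse enough that such $|\alpha_p|\to 0$ remains available. The factor $(|\alpha_p|/r_p)^{-(k+1)}$ is exactly what converts a $\widetilde M$-sized bump into an $M$-sized contribution on $[0,\infty)$.

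With the parameters fixed, verifying the four properties amounts to separately estimating the diagonal and off-diagonal parts of $\sum_p g_p^{(k)}$. For (4), the poles $\alpha_p\pm ir_p$ accumulate only at $0$ and never touch the real axis, so on any $\{|x|\ge\varepsilon\}$ only finitely many poles are nearby and $\sum_p c_p r_p<\infty$ gives locally uniform convergence in a complex neighborhood, whence analyticity on $\R\setminus\{0\}$. For (3), at $x_p$ (equal to $\alpha_p$, or shifted by $O(r_p)$ to avoid the parity vanishing of the even kernel) the diagonal term contributes $\gtrsim c_p\,n_p!/r_p^{\,n_p+1}\approx n_p!\,\widetilde m_{n_p}$, while separation makes all $q\ne p$ terms lower order. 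For (2), summing the Cauchy bound everywhere, the diagonal is $\le k!\,\widetilde m_k$ by the saturation normalization and the off-diagonal sum is absorbed into the slack factor $2^{k+1}$ via center separation and rapid decay of $c_p$; this yields $f\in C^{\widetilde M}(\R)$ and, since derivatives at $0$ are finite while the radius of analyticity there is $0$, the non-analyticity at the origin. For (1), on $[0,\infty)$ the distance from $s\ge0$ to each pole increases in $s$, so $s=0$ is the worst case, and the separation condition makes $\sum_p c_p\,|\alpha_p|^{-(k+1)}\le C h^k m_k$, placing $f|_{[0,\infty)}$ in $C^M$.

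The main obstacle is the simultaneous balancing forced by (1)--(3). Property (3) demands $c_p/r_p^{\,n_p+1}$ be as large as $\widetilde m_{n_p}$; property (2) demands the same configuration, after summing all bumps at every point, stay within $2^{k+1}$ of $\widetilde m_k$; property (1) demands that from the right the contribution collapse to the strictly smaller scale $m_k$. Reconciling the large quantity $c_p/r_p^{\,n_p+1}$ with the small quantity $c_p/|\alpha_p|^{\,n_p+1}$ is precisely what forces the geometric separation $|\alpha_p|/r_p\gtrsim(\widetilde m_{n_p}/m_{n_p})^{1/n_p}$ together with $|\alpha_p|\to0$, and the delicate point is to check that a sufficiently sparse $\{n_p\}$ makes these compatible while keeping the off-diagonal sums negligible. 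The role of log-convexity, which concentrates each bump's derivative estimate at the single index $n_p$, is what ultimately keeps the three families of inequalities from colliding.
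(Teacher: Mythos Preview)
Your proposal is essentially the paper's own construction: the paper takes $g_n(x)=A_n/(2^n(z_n-x))$ with $z_n=x_n+iy_n$, $x_n<0$, $y_n>0$ both tending to $0$, tunes $y_n^{-1}\in[\widetilde m_n/\widetilde m_{n-1},\widetilde m_{n+1}/\widetilde m_n]$ and $A_n$ so that $A_n y_n^{-(k+1)}\le\widetilde m_k$ with equality at $k=n$ (your saturation normalization), then extracts a sparse subsequence to control the off-diagonal sums---exactly your strategy, with your Poisson kernel being the imaginary part of their Cauchy kernel. The one imprecision in your sketch is the separation condition for property~(1): the stated bound $|\alpha_p|/r_p\gtrsim(\widetilde m_{n_p}/m_{n_p})^{1/n_p}$ controls only the $n_p$-th derivative from the right, whereas you need $c_p\,|\alpha_p|^{-(k+1)}\le m_k$ for \emph{all} $k$; the paper handles this cleanly by introducing $\varphi(\xi)=\sup_t \xi^{t+1}/m_t$ and choosing $|\alpha_p|$ via $\varphi(|\alpha_p|^{-1})=\widetilde\varphi(r_p^{-1})$.
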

\medskip

Specifically, this  result shows that if we choose a sequence $K$ such that the class $C^K(\R)$
 is quasi-analytic and $C^K(\R)\subsetneq C^{\widetilde{M}}(\R)$, then there exists a function from the quasi-analytic class $C^M[0,\infty) $ which cannot be extended to a function from $C^K(\R),$ since a  quasi-analytic extension, if it exists, is unique, so it should coincide with this function on the whole axis, and this function does not belong to $C^K(\R).$

\medskip
It should be noted that the function constructed to satisfy the conclusion of this theorem bears a relationship to the very first example of a Denjoy-Carleman function, which is not real analytic, given by  \'{E}mile Borel in  \cite{borel1} and \cite{borel2}; see also  \cite{thilliez}. Furthermore, examining the Taylor expansion at $0$ of $f$ reveals another important fact. $T_0 f \in \F^M,$ so $T_0 f$ is a formal power series satisfying all the derivative bounds given by the sequence $M,$ and yet it corresponds to $f,$ which is not a function in $C^M$ in any neighborhood around the origin as soon as the sequence $\widetilde M$ is suitably chosen; see Theorem \ref{misha2} below. Using a functional theoretic argument that produced a lacunary series, Torsten Carleman showed in \cite{carlemanb} as early as 1926 that there exist non-convergent formal power series whose coefficients satisfy the Denjoy-Carleman bounds but which do not correspond to Denjoy-Carleman functions in the same class. The function $f$ in Theorem~\ref{misha} finally gives an explicit example of a function of this kind.

\medskip
The authors of this work who come from quite different mathematical backgrounds were very fortunate to meet at the American Institute of Mathematics in Palo Alto, CA for a SQuaRE workshop. They wish to express their gratitude to AIM for the hospitality and financial support. They are also indebted to Michel Coste for his help in the proof of Lemma \ref {WEP}, to Edward Bierstone for his suggestions that led to a more elegant proof of the main result, and to the referee for his observations that improved the clarity of this paper.

\medskip The paper  is organized as follows: A  preliminary section is
devoted  to  describing  in  detail  the  classes  of  Denjoy-Carleman
functions   considered as well as the  intricacies  of
Weierstrass division  in the Denjoy-Carleman  settings and its
relation to Weierstrass Preparation. Section 3 deals with extension of
functions in  quasi-analytic classes. In  Section 4, Theorem~\ref{wpt}
is proven assuming  Theorem~\ref{misha}. Finally, Section 5 constructs
the non-extendable function that proves Theorem~\ref{misha}.

\section{Preliminaries}

{\it Denjoy-Carleman classes -- basic definitions and results}. The set-up explained below constitutes the most standard one for Denjoy-Carleman  classes as detailed in \cite{thilliez}.
\medskip

\noindent  For a multi-index $\alpha=(\alpha_1,\ldots, \alpha_n) \in \N^n,$ we will employ the following notation:
$$|\alpha|:=\alpha_1+\cdots+\alpha_n,$$
$$\alpha!:=\alpha_1 !  \cdots \alpha_n !,$$
$$D^\alpha := \frac{\partial^{\, |\alpha|}}{\partial x_1^{\,
\alpha_1} \cdots \,
\partial x_{n}^{\, \alpha_{n}}},$$  $$x^{\alpha}:={x_1}^{\alpha_1} \cdots {x_n}^{\alpha_n}.$$

\noindent We also use the following notations:

 $$\E (U) \text { is the space of infinitely smooth complex functions
 on an  open set } U\subset  \R^n,$$

 $$\E_0 \text { is the ring  of germs at  the origin of such  functions},$$

 $$ \F_n \, \, (\text { or simply } \F ) \text { is the
ring of formal power series in } n \text { variables},$$

 $$\oka_n \, \,  (\text { or simply } \oka ) \text { is the   ring of
convergent power series }.$$
\medskip

Let $M=\{m_0, m_1,
m_2, \dots \}$ be an increasing sequence of positive real numbers with $m_0=1.$

\begin{Definition}
We say that a function $f\in \E(U)$ belongs to the Denjoy-Carleman class $C^{M}_n(U)$ if there are  constants $A,B$ depending on $f,$  such that
$$\forall\, x \in U,\, \forall\, \alpha \in \N^n \quad |D^\alpha f(x)|\leq |\alpha|!\, A B^{|\alpha|}\, m_{|\alpha|}
.$$
\end{Definition}

\begin{Definition}
The ring of germs $ C^{M}_{n,0} $ is the inductive limit of $C^{M}_n(U)$, for $U$ in the family of neighborhoods of the origin.

\end{Definition}

\begin{Definition}
We say that a formal power series $$f = \sum_{\alpha \in \N^n} \frac{a_\alpha}{\alpha!} z^\alpha\in \F_n$$ belongs to the Denjoy-Carleman class $\F^{M}_n$ if there are
 constants $A,B$   such that
$$\forall\, \alpha \in \N^n\quad  |a_\alpha|\leq |\alpha|!\, A B^{|\alpha|}\, m_{|\alpha|}
.$$
\end{Definition}

In order to get classes of functions with some good structural properties,
we must impose some conditions on the sequence $M=\{ m_n\}$.

\smallskip

\begin{equation}\label{logconv}
\displaystyle \frac{m_{j+1}}{m_j}\leq\frac{m_{j+2}}{m_{j+1}} \hbox{\rm \,  for all }j \geq 0 \, \, \, \hbox { ({\em logarithmic convexity}) }.
\end{equation}

and

\begin{equation}\label{roots}
 \displaystyle \sup_j \sqrt[j]{\frac{m_{j+1}}{m_j}}< \infty
\end{equation}

We summarize the properties of $C^{M}_{n,0} $ when the sequence $M$ verifies
conditions (\ref{logconv}, \ref{roots}). We omit the $n$ and write simply $C^{M}_0 .$
For the proof and further references, one can consult \cite{thilliez}.

\begin{Proposition}
\label{DC}

The following properties hold for $C^M_0:$
\begin{itemize}
\item $C^{M}_0$ is a local ring containing $\oka$ with maximal ideal $$\m = \{f \in C^M_0 \, : \, f(0)=0\} = (x_1,\ldots x_n)C^{M}_0 .$$
\item $C^M_0$ is closed under division by coordinates functions $x_1, \dots, x_n.$
\item $C^M_0$ is closed under composition.
\item $C^M_0$ is closed under differentiation.
\item The Implicit Function Theorem holds for $C^M_0.$
\item The Inverse Function Theorem holds for $C^M_{1,0}.$
\end{itemize}
\end{Proposition}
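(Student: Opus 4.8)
The plan is to derive every item from two elementary consequences of the hypotheses on $M$. From logarithmic convexity (\ref{logconv}) together with $m_0=1$ one obtains the \emph{submultiplicativity} $m_j\, m_k \leq m_{j+k}$ for all $j,k$: writing $m_{j+k}/m_k$ as a telescoping product of the increasing ratios $m_{i+1}/m_i$ bounds it below by $\prod_{i=0}^{j-1} m_{i+1}/m_i = m_j/m_0 = m_j$. From (\ref{roots}) one gets a constant $C$ with $m_{j+1}/m_j \leq C^{\,j}$, i.e. the ratios grow at most geometrically. I expect these two facts to drive the entire proposition.

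First, for the ring structure, closure under addition is immediate from the definition, and closure under multiplication follows from the Leibniz formula $D^\alpha(fg) = \sum_{\beta\leq\alpha}\binom{\alpha}{\beta} D^\beta f\,D^{\alpha-\beta}g$, where submultiplicativity collapses $m_{|\beta|}\,m_{|\alpha-\beta|}$ into $m_{|\alpha|}$ and the binomial and multinomial factors are absorbed into an enlarged geometric constant. To see $\oka\subset C^M_0$, note that a convergent series satisfies $|D^\alpha f(x)|\leq |\alpha|!\,A B^{|\alpha|}$ near $0$, and since $M$ is increasing with $m_0=1$ we have $m_{|\alpha|}\geq 1$, so the analytic bound is already an $M$-bound. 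Closure under differentiation is where (\ref{roots}) enters: from $|D^\alpha\partial_i f| = |D^{\alpha+e_i}f| \leq (|\alpha|+1)!\,AB^{|\alpha|+1}m_{|\alpha|+1}$ one rewrites $m_{|\alpha|+1} = (m_{|\alpha|+1}/m_{|\alpha|})\,m_{|\alpha|}\leq C^{|\alpha|}m_{|\alpha|}$ and bounds the factor $(|\alpha|+1)$ by $2^{|\alpha|}$, producing an $M$-bound with enlarged constant $B' = 2CB$.

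Next, division by a coordinate $x_i$: if $f\in C^M_0$ vanishes on $\{x_i=0\}$, set $g(x) = \int_0^1 (\partial_i f)(x_1,\dots,tx_i,\dots,x_n)\,dt$, so $f = x_i g$. Differentiating under the integral and using that each $x_i$-derivative of the integrand produces a factor $t$ with $\int_0^1 t^k\,dt\leq 1$, the bounds on $\partial_i f$ (known from differentiation-closure) transfer to $g$, giving item~2. Applied componentwise to $f(x) = \sum_i x_i\int_0^1 \partial_i f(sx)\,ds$, this yields $\m = (x_1,\dots,x_n)C^M_0$. The evaluation $f\mapsto f(0)$ is a ring homomorphism onto $\C$ with kernel $\m$, so $C^M_0$ is local once every $f$ with $f(0)\neq 0$ is shown to be a unit; writing $f = f(0)+g$ with $g\in\m$, this amounts to $1/f = F(g)\in C^M_0$ for the analytic germ $F(t)=1/(f(0)+t)$, hence reduces to closure under composition.

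The central and hardest step is therefore closure under composition: if $\varphi_1,\dots,\varphi_p\in C^M_0$ with $\varphi_j(0)=0$ and $F$ is $C^M$ near the origin, then $F\circ(\varphi_1,\dots,\varphi_p)\in C^M_0$. The tool is the Fa\`a di Bruno formula, which expands $D^\alpha(F\circ\varphi)$ as a sum over partitions of $\alpha$ of products of derivatives of $F$ with derivatives of the $\varphi_j$. I expect the combinatorial bookkeeping here to be the only genuinely delicate point: one must check that summing the $M$-bounds over all partitions still yields a bound of the form $|\alpha|!\,AB^{|\alpha|}m_{|\alpha|}$, and it is precisely submultiplicativity $\prod_i m_{|\beta_i|}\leq m_{\sum_i|\beta_i|}\leq m_{|\alpha|}$ that keeps the product of sequence terms under control while the number of partitions and the multinomial coefficients are absorbed into the geometric constant. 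Once composition is available, the unit criterion and hence the local ring structure follow as above; the Inverse Function Theorem in one variable follows from the functional equation $\psi' = (1/\varphi')\circ\psi$ for the (classically existing, smooth) inverse $\psi$, together with the fact that $1/\varphi'$ is a unit and composition is closed, propagating the $M$-bounds recursively; and the Implicit Function Theorem follows by the standard reduction to inversion combined with composition-closure. All remaining items are formal consequences of these estimates; for the detailed computations we refer to \cite{thilliez}.
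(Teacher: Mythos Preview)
The paper does not supply a proof of this proposition at all; it simply states ``For the proof and further references, one can consult \cite{thilliez}.'' Your proposal therefore goes well beyond the paper by sketching the standard argument, and since you too close by citing \cite{thilliez}, there is no conflict of approach to report.

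Your identification of the two driving facts---submultiplicativity $m_j m_k \le m_{j+k}$ from logarithmic convexity and the bound $m_{j+1}/m_j \le C^{\,j}$ from (\ref{roots})---is exactly right, and the treatments of the ring structure, closure under differentiation, division by coordinates, and composition via Fa\`a di Bruno are the standard ones. Two points in the sketch deserve more care, however. First, your argument for the one-variable Inverse Function Theorem is circular as written: invoking composition-closure on $(1/\varphi')\circ\psi$ presupposes $\psi\in C^M_{1,0}$, which is precisely what you want to prove; the actual argument bounds $\psi^{(k)}$ directly from the Fa\`a di Bruno relations obtained by differentiating $\varphi\circ\psi = \mathrm{id}$, without assuming $\psi$ is already in the class. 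Second, the Implicit Function Theorem is asserted for $C^M_0$ in arbitrary dimension, and the usual reduction of IFT to inversion requires the Inverse Function Theorem in the \emph{same} dimension, not just in one variable, so ``standard reduction to inversion'' does not close the gap from one-variable IVT alone. Since you defer the details to \cite{thilliez} in any case, these are matters of presentation rather than genuine obstructions.
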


\begin{Definition}
A Denjoy-Carleman class $C^{M}_0,$ properly containing the ring $\oka$, is called {\em quasi-analytic} if the Taylor map $T_0: C^{M}_0  \to \F^M$
 $$T_0 f = \sum_{\alpha \in \N^n} \frac{f^{(\alpha)}(0)}{\alpha !} z^\alpha$$
 sending each germ to its Taylor series at $0$ is injective. 

\end{Definition}

By the Denjoy-Carleman Theorem, the Taylor map is injective if and only if

\begin{equation}
\label{quasi}
\:\sum_{k=0}^\infty \: \frac{m_k}{(k+1) \, m_{k+1}} = \infty.
\end{equation}

Also $C^{M}_0 \supsetneq \oka$ if and only if

\begin{equation}
\label{biggerthanra}
\lim_{j \rightarrow \infty} \sqrt[j]{m_j} = \infty.
\end{equation}

This last condition combined with logarithmic convexity is easily proved to be equivalent to the following one:

\begin{equation}
\label{biggerthanra2}
m_{j+1} = \alpha_j m_j \,\,\hbox{\rm with}\, \lim_{j \rightarrow \infty} \alpha_j = \infty.
\end{equation}

We will use both in what follows.

Finally, we recall that the inclusion $C^M_0 \subset C^{\widetilde M}_0$ is equivalent to the condition

$$\displaystyle \sup_{n\geq 0}  \sqrt[n]{\frac {m_n}{\widetilde m_n}}<\infty.$$
\medskip

{\it Basics of Weierstrass Division and  Preparation}.
Assume $n \geq 2.$
\smallskip

We say that $\varphi\in C^M_{n-1,0}[x_n],$ a monic polynomial in $x_n$ of degree $d$,
$$ \varphi(x)={x_n}^d+a_1(x'){x_n}^{d-1}+\ldots+a_d(x'),$$
where $x=(x',x_n)$ and  $a_j \in C^M_{n-1,0},$
is a \emph{distinguished Weierstrass polynomial in $x_n$} if
$a_j(0)=0$, for all $1 \leq j \leq d.$

\smallskip
Such  a  polynomial $\varphi$  is  called  \emph{hyperbolic} in $x_n$ if  there
exists a neighborhood  $U$ of $0$ in ${\R}^{n-1}$  such that $\forall
x' \in U$,  all the roots of $\varphi(x',\cdot)$  are real; otherwise,
$\varphi$ is called \emph{non-hyperbolic} in $x_n.$

\smallskip
A  germ $f  \in C^{M}_{n,0}$  is \emph{regular  in  $x_n$ of
  order  d}  if  there  exists   a  unit  $u$  in  the  ring
$C^{M}_{n,0}$ such that $f(0,  x_n)=u(0,x_n) \, x^d_n.$

Note this is
equivalent  to  saying  that  $$f(0,0)=\frac{\partial}{\partial  x_{n}}
f(0,0)=\dots  =\frac{\partial}{\partial x^{d-1}_{n}}  f(0,0)=0,$$ while
$$\frac{\partial}{\partial x^{d}_{n}} f(0,0)\neq 0.$$

\smallskip
 A germ $f \in
C^{M}_{n,0}$  is \emph{strictly  regular  in $x_n$  of  order d}  if
$$\frac{\partial}{\partial  x^{d}_{n}}  f(0,0)\neq   0,  \text { but  for  any }
\alpha   \in  \N^n,\,  |\alpha|  \leq  d-1, \quad D^{\alpha} f(0,0)=0.$$

\begin{Remark}
  Note that any germ of a  smooth function and any formal power series
  can be made strictly regular with respect to a chosen variable via
  a linear change of variables.
\end{Remark}

\begin{Definition}
  A ring $R_n$ of germs  of infinitely differentiable functions in $n$
  variables  at $0$  is said  to satisfy  the  \emph{Weierstrass Preparation
  Theorem} if for any $d \geq 1$ and any $f \in R_n$ that is regular of
  order $d$ in  $x_n$  \emph{can be prepared} in $R_n$, i.e., there exist a unit $u \in \R_n$ and
  a   distinguished   Weierstrass   polynomial  $\varphi   \in
  R_{n-1}[x_n]$ of order $d$, such
  that $$f = u \varphi.$$
\end{Definition}

\begin{Definition}
  We  say  that  the  \emph{Weierstrass Division  Property}  holds  in
  $C^{M}_{n,0}$ for a function $f$ that  is regular in $x_n$ of order $d$
  if  for every $g  \in C^{M}_{n,0},$  there exist  $q \in  C^{M}_{n,0}$ and  $h \in C^{M}_{n-1,0}[x_n] $, a polynomial of degree $\le (d-1)$ in $x_n,$
   such that
$$g=f q + h.$$ 
\end{Definition}

\begin{Remark}
  It is  classically known  that in the  ring of  holomorphic function
  germs, the ring of real analytic function germs,  and the
  ring  of  smooth  germs, the Weierstrass  Preparation Theorem holds true and they also have  the  Weierstrass Division Property.
  \end{Remark}

  \begin{Remark} What are the relations between the two Weierstrass properties?

   The Weierstrass  Division  Property for  an element  $f$
  regular in $x_n$  of order $d$ in a ring  of germs $R_0$ implies
  that the  same $f$ can  be prepared. Indeed,  it is enough  to divide
  $x_n^d$ by  $f,$ and we obtain $x_n^d=  q f +r$ with $q,r \in R_0.$ Comparing  orders we see
  that $q$  must be a  unit and the  remainder $r$ is a  polynomial in
  $x_n$ of degree  $d-1$ whose coefficients must  vanish at $0$. Hence
  $f=q^{-1}(x_n^d-r)$ is prepared.

  \medskip
    We do not know if the Weierstrass Preparation Theorem for a ring $R_0$ implies the Weierstrass Division Property for this ring.
A natural way to establish this hits an obstacle: assume the  Weierstrass
  Preparation  Theorem  holds  true  in  $R_0$  and  take  $f,g\in
  R_0.$ We  can assume both are  regular in the  same variable, so
  that there exist units $u,v$  and Weierstrass polynomials  $p,q$ such
  that $f=up,$  $g =vq.$ Perform the Euclidean division between the two Weierstrass polynomials $p$ and $q,$ we get $p= aq +r$ and thus $f = up = v^{-1} (ua)vq + ur = v^{-1}ua g + ur.$ There is no guarantee, however, that $ur$ can be reduced to a polynomial.
\end{Remark}

From    Childress   \cite{childress}    and    Chaumat-Chollet   \cite
{chaumatchollet}, we  know that for $n\geq 2$  a Weierstrass polynomial
$g\in  C^{M}_{n,0}$  has the Weierstrass  Division Property if  and only if it  is a
hyperbolic  polynomial. In  particular, the  Weierstrass Division  Property
does not  hold in general in $ C^{M}_{n,0}$ (the reader can find specific examples of functions for which it fails in \cite{childress} and \cite{thilliez}), but this fact does not
imply the failure of the Weierstrass Preparation Theorem.

\medskip
Note that
the Weierstrass  Division  Property, and,  hence, the Weierstrass Preparation, both hold in the ring $\F^M$, and, moreover, this ring  is  Noetherian. This was
proved by Chaumat et Chollet in \cite {formalnoetherian}. Yet, the Taylor morphism $C^{M}_0 \to \F^M$ is not surjective as proven long ago  by Carleman in \cite{carlemanb}, so the better algebraic properties of $\F^M$ do not automatically transfer to $C^{M}_0.$

\begin{Remark} What about the Weierstrass Division and Preparation in \emph{non-quasi-analytic} Denjoy-Carleman rings $C^{\{m_k\}}_{n,0}$?
 M. Bronshtein \cite{bronshteinnext} 
 has shown that the Weierstrass Division Property holds  \emph{weakly}: for any $d\in \N$, for every $f\in C^{\{m_k\}}_{n,0},$ regular of order $d$ in $x_n,$ and for every function $g  \in C^{\{m_k\}}_{n,0},$
  there exist a function   $q$ from the \emph{wider} Denjoy-Carleman ring $C^{\{m_{dk}\}}_{n,0}$ and
  a polynomial $h \in C^{\{m_{dk}\}}_{n-1,0}[x_n]$ over this wider ring, such that  $$g=f q + h.$$

From this result, he deduced that, given $d\in \N$, every $f\in C^{\{m_k\}}_{n,0}$ regular of order $d$ in $x_n$ can be  prepared in this wider ring $C^{\{m_{dk}\}}_{n,0}$, meaning that there exist  a unit $u\in C^{\{m_{dk}\}}_{n,0}$ and a distinguished  Weierstrass   polynomial    $\varphi   \in C^{\{m_{dk}\}}_{n-1,0}[x_n]$ of degree $d$, such
  that $$f = u \varphi.$$

  So  a \emph{controlled} widening of the ring (depending upon the regularity of $f$) saves the day for both Weierstrass theorems. This widening is optimal.

 This result suggests that in the non-quasi-analytic case one should consider a wider ring,
  $$\widetilde{C}^M(U) = \bigcup_{d\in \N} C^{\{m_{kd}\}}(U).$$
   In this new class,  the Weierstrass Division Theorem holds and therefore every function can be prepared.

\medskip

When we started working on the quasi-analytic case, we expected to find a wider quasi-analytic ring containing all the objects we were looking for. Instead, it turned out that the situation is drastically different in the quasi-analytic case.


  \medskip
  Our main result asserts:
  \medskip

   {\it Let $C^M_{n,0},\, \, n\ge 2$ be a quasi-analytic Denjoy-Carleman ring. Choose ANY  wider quasi-analytic Denjoy-Carleman ring $C^{\widetilde{M}}_{n,0}$. There exists $f\in C^M_{n,0},$ regular of order $2$ in $x_n,$ which cannot be   prepared in $C^{\widetilde{M}}_{n,0}.$}
  \medskip

  So we say that the Weierstrass Preparation Theorem \emph{strongly fails} in quasi-analytic Denjoy-Carleman rings in dimensions $\ge 2.$
  \end{Remark}

\section{Extension of Functions from Denjoy-Carleman Classes}

Given a quasi-analytic local ring  $C^M_0 \supsetneq \oka,$ we will need to find a wider ring $C^{\widetilde M}_0$ verifying two conditions pulling in opposite directions.
On one hand, we require $C^{\widetilde M}_0$ to be quasianalytic, that is to
verify \ref {quasi}.  On the other hand, we ask that $\displaystyle \lim_ {n
  \to \infty} \sqrt[n]{\frac{\widetilde m _n}{m_n}}= \infty$, which in
particular implies $C^M_0 \subsetneq C^{\widetilde M}_0 $. This is done in the following lemma:

\begin{Lemma}\label{WEP}
For any quasi-analytic local ring $C_0^M$  there exists a quasi-analytic local
ring $C_0^{\widetilde M}$ with $C_0^M \subset C_0^{\widetilde M}$ and  $\displaystyle \lim_ {j \to \infty} \sqrt[j]{\frac{\widetilde m _j}{m_j}}= \infty$.
\end{Lemma}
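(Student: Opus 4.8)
The plan is to build $\widetilde M$ by prescribing its consecutive ratios. Write $\ell_k = m_{k+1}/m_k$, so that logarithmic convexity \eqref{logconv} says $\ell_k$ is non-decreasing, the root condition \eqref{roots} says $\sup_k \ell_k^{1/k} < \infty$, and quasi-analyticity \eqref{quasi} says $\sum_k \tau_k = \infty$ where $\tau_k := \frac{1}{(k+1)\ell_k}$. I look for $\widetilde m_n$ with $\widetilde m_0 = 1$ and ratios $\widetilde m_{n+1}/\widetilde m_n = \ell_n e^{a_n}$ for a sequence $a_n \ge 0$ to be chosen. Then $\widetilde m_N = m_N\, e^{\sum_{n<N} a_n}$, so the inclusion $C^M_0 \subset C^{\widetilde M}_0$ is automatic (the relevant supremum is $\le 1$ because $\widetilde m_n \ge m_n$), while the desired strong inequality becomes $\bigl(\widetilde m_N/m_N\bigr)^{1/N} = \exp\!\bigl(\tfrac1N\sum_{n<N} a_n\bigr) \to \infty$. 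Requiring $a_n$ to be non-decreasing makes each ratio $\ell_n e^{a_n}$ non-decreasing, a product of two non-decreasing positive sequences, which is exactly log-convexity of $\widetilde M$; and if moreover $a_n\to\infty$, the Cesàro means $\tfrac1N\sum_{n<N}a_n\to\infty$, forcing the strong inclusion.

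This reduces the lemma to a one-dimensional problem: find a non-decreasing sequence $a_n\ge 0$ with $a_n\to\infty$ for which quasi-analyticity of $\widetilde M$, namely $\sum_k \tau_k\, e^{-a_k} = \infty$, still holds. The main obstacle is precisely this tension — boosting the ratios (large $a_k$) is what produces the strong inclusion, but it simultaneously shrinks the Denjoy--Carleman series $\sum_k \tau_k e^{-a_k}$ and threatens quasi-analyticity. The resolution is the Abel--Dini theorem: for a divergent series of positive terms with partial sums $S_k = \sum_{j\le k}\tau_j$, one still has $\sum_k \tau_k/S_k = \infty$ (for any fixed $m$ the block $\sum_{m<k\le n}\tau_k/S_k \ge (S_n-S_m)/S_n = 1 - S_m/S_n \to 1$ as $n\to\infty$, so the Cauchy criterion fails), even though $1/S_k\to 0$. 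I therefore set $a_k := \log^+ S_k$, so that $e^{-a_k} = 1/\max(1,S_k)$. Since $S_k$ is non-decreasing and tends to $\infty$ — this is where quasi-analyticity \eqref{quasi} of $M$ is used — the sequence $a_k$ is non-decreasing with $a_k\to\infty$, and $\sum_k \tau_k e^{-a_k}$ agrees with the Abel--Dini series up to the finitely many initial terms where $S_k<1$, hence diverges. This gives \eqref{quasi} for $\widetilde M$.

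It remains to check the standing hypotheses on $\widetilde M$, the only non-immediate one being the root condition \eqref{roots}. Here I use that $\ell_k\ge \ell_0 = m_1 \ge 1$ forces $\tau_k \le 1/(k+1)$, whence $S_k \le k+1$ and $S_k^{1/k}\le (k+1)^{1/k}\to 1$. Combined with the root condition for $M$, this yields $\bigl(\ell_k e^{a_k}\bigr)^{1/k} = \ell_k^{1/k}\,S_k^{1/k} \le R\,(k+1)^{1/k}$ for large $k$, which is bounded; the finitely many remaining terms are harmless. Thus $\widetilde M$ is log-convex, satisfies \eqref{roots}, is quasi-analytic, contains $M$, and satisfies $\lim_{j\to\infty}(\widetilde m_j/m_j)^{1/j} = \infty$, completing the construction.
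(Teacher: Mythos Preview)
Your proof is correct and follows essentially the same approach as the paper's: both multiply the ratio sequence $m_{j}/m_{j-1}$ by a slowly diverging function of the partial sums of the Denjoy--Carleman series (the paper uses $\sqrt{\mu_j}$, you use $\max(1,S_k)$) and then invoke an Abel--Dini--type argument to show quasi-analyticity survives. Your write-up is in fact more careful, since you explicitly verify log-convexity and the root condition \eqref{roots} for $\widetilde M$, which the paper leaves implicit.
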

\begin{proof}
Since $C_0^M$ is quasi-analytic, we have $\displaystyle \sum_{j\geq 1}\frac{m_{j-1}}{j\cdot m_j}=\infty$. Put $\displaystyle \alpha_j =\frac{m_j}{m_{j-1}}$  and  $\displaystyle \mu_j =\sum_{k=1}^j\frac{1}{k\,\alpha_k}$. So $\displaystyle \lim_ {j \to \infty} \mu_j = \infty$.
Define $\beta_j= \alpha_j \,\sqrt{\mu_j}$, so that $\displaystyle \lim_ {j \to \infty} \frac {\beta_j}{\alpha_j}
=\infty$.

Then $\displaystyle \sum\frac{1}{j\,\beta_j}$ diverges. Indeed, its partial sums verify
$$\sum  _{k= 1}^j\frac{1}{k\,\beta_k}=\sum _{k= 1}^j\frac{1}{k\,\alpha_k\,\sqrt{\mu_k}}>\frac{1}{\sqrt{\mu_j}}\sum _{k= 1}^j\frac{1}{k\,\alpha_k}=\sqrt{\mu_j}.$$

Now let us define $ \widetilde m_j = \beta_j \widetilde m_{j-1}$, $\widetilde m_0=1$. We obtain that
\begin{itemize}
\item $C_0^{\widetilde M}$ is quasi-analytic by construction.
\item $\displaystyle \frac{\widetilde m_j}{m_j} =\frac{\beta_j}{\alpha_j}\frac{\widetilde m_{j-1}}{m_{j-1}}$, so $\displaystyle \lim_ {j \to \infty} \frac {\beta_j}{\alpha_j}
=\infty$ gives $\displaystyle \lim_ {j \to \infty} \sqrt[j]{\frac{\widetilde m_j}{m_j}}= \infty.$
\item The previous limit implies $\displaystyle \sup_{j\geq 0} \left (\frac {m_j}{\widetilde m_j}\right )^{\frac{1}{j}}<\infty$ as needed for $C_0^{ M}\subset C_0^{\widetilde M} .$
\end{itemize}

\end{proof}

\begin{Definition} For $U = [0,\varepsilon) \subset \R_+$ let $$C_{+}^{M}(U) = \{ f\in \E(U): \, \exists A,B > 0
\quad \forall\, k\in \N, \, \,  \forall\, x \in U\quad |D^k f(x)| \le k! AB^k m_k\}.   $$

The space $C_{+,0}^M$ is the space of germs at $0$ of such functions.
\end{Definition}

It is very natural to ask whether every germ  in $C^M_{+,0}$ is the restriction of some germ  in
$C^{M}_{1,0}$, i.e., if $$C^M_{+,0} \subset C^{M}_{1,0}|_{\R_+},$$ or, more generally,  if
\begin{equation}\label{funcext}C^M_{+,0} \subset C^{\widetilde{M}}_{1,0}|_{\R_+}\end{equation} for a wider class $C^{\widetilde M}_{1,0}$, with $\widetilde{M}$ depending only upon $M$.

If this is true, we say, respectively, that $ C_{+,0}^M $  has the {\sl  strong extension property} or the {\sl weak extension property}.

\begin{Remark} For a non-quasi-analytic class $C^M_{+,0}$ there are various descriptions of (generally, wider) classes $C^{\widetilde{M}}_{1,0}$ satisfying (\ref{funcext}); see for example the papers mentioned in the introduction. So the weak (but usually not the strong) extension property  holds for non-quasi-analytic Denjoy-Carleman classes.
\end{Remark}

\medskip
The situation of quasi-analytic Denjoy-Carleman classes is quite different. The crucial difference from  the previous case is  that if a germ from a quasi-analytic class $C^M_{+,0}$ extends to a germ from another quasi-analytic class $C^{\widetilde{M}}_{1,0}$, then this extension  is unique due to the quasi-analyticity. In fact, this uniqueness of extension often prevents the existence of an extension.

\medskip
In this framework, there are interesting  results
by Langenbruch \cite{langenbruch} and Thilliez \cite{thillieznonext}; see also Nowak \cite{nowakcounter}. From there we extract the following statement:

\begin{Theorem}\label{nowak}  Let $C^M_{1,0}$ and $C^{\widetilde M}_{1,0}$  be a  quasi-analytic local
  rings. If $C^M_{1,0}$ properly contains the ring  of analytic germs $\oka_1$
  and $C^M_{1,0} \subset C^{\widetilde M}_{1,0},$ then $C^{M}_{+,0}\setminus \left(C^{\widetilde M}_{1,0}|_{\R_+}\right) \neq
  \emptyset.$
\end{Theorem}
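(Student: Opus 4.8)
The plan is to obtain Theorem~\ref{nowak} as a consequence of the explicit construction of Theorem~\ref{misha} combined with the widening Lemma~\ref{WEP}, the glue being the uniqueness of quasi-analytic continuations. Theorem~\ref{misha} already manufactures, for a prescribed pair of sequences, a function that is one-sidedly of the first class while its derivatives on the negative axis saturate the bounds of the second; the only freedom I need to exercise is the choice of that second sequence, taken strictly larger than $\widetilde M$ so that the saturation overshoots $\widetilde M$.

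Concretely, I would first apply Lemma~\ref{WEP} to the quasi-analytic ring $C^{\widetilde M}_{1,0}$, producing a quasi-analytic ring $C^{\widehat M}_{1,0}$ with $C^{\widetilde M}_{1,0}\subset C^{\widehat M}_{1,0}$ and $\lim_{j\to\infty}(\widehat m_j/\widetilde m_j)^{1/j}=\infty$, so that the inclusion is proper. Next I would feed the pair $(M,\widehat M)$ into Theorem~\ref{misha} and obtain a function $f$ satisfying its four conclusions. Conclusion~(1) says that the one-sided germ $f|_{\R_+}$ belongs to $C^M_{+,0}$, which is the candidate witness; conclusion~(2) says the two-sided germ of $f$ at $0$ belongs to $C^{\widehat M}_{1,0}$.

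I would then argue that $f\notin C^{\widetilde M}_{1,0}$. If it were, an estimate $|f^{(p)}(x)|\le p!\,AB^p\widetilde m_p$ would hold for all $p$ on some neighborhood of $0$; evaluating it along the sequences $x_p\to 0^-$ and $n_p\to\infty$ furnished by conclusion~(3) (in which $\widehat m$ now plays the role of the second sequence) yields, for large $p$,
$$n_p!\,\widehat m_{n_p}\le\bigl|f^{(n_p)}(x_p)\bigr|\le n_p!\,AB^{n_p}\,\widetilde m_{n_p},$$
hence $(\widehat m_{n_p}/\widetilde m_{n_p})^{1/n_p}\le A^{1/n_p}B\to B<\infty$, contradicting $(\widehat m_j/\widetilde m_j)^{1/j}\to\infty$. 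Finally, to exclude an extension, suppose some $g\in C^{\widetilde M}_{1,0}$ restricts to $f|_{\R_+}$. Since $C^{\widetilde M}_{1,0}\subset C^{\widehat M}_{1,0}$, both $f$ and $g$ lie in $C^{\widehat M}_{1,0}$ and coincide on a one-sided neighborhood of $0$, so their Taylor series at $0$ agree; injectivity of $T_0$ on the quasi-analytic ring $C^{\widehat M}_{1,0}$ then forces $g=f$ as germs. This contradicts $f\notin C^{\widetilde M}_{1,0}$, and therefore $f|_{\R_+}\in C^M_{+,0}\setminus\bigl(C^{\widetilde M}_{1,0}|_{\R_+}\bigr)$.

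The entire difficulty is concentrated in Theorem~\ref{misha}: the hard part is producing a single function whose one-sided jet at $0$ is of class $C^M$ and yet whose one-sided data force every quasi-analytic continuation to breach the $\widehat M$, hence $\widetilde M$, bounds on the far side. Given that construction, the reduction above is routine, the only points deserving care being that Lemma~\ref{WEP} yields a \emph{strictly} larger quasi-analytic class (so the overshoot is genuine) and that the uniqueness step must be run inside the common overring $C^{\widehat M}_{1,0}$, not inside $C^{\widetilde M}_{1,0}$, where $f$ does not live. The hypothesis $C^M_{1,0}\supsetneq\oka_1$ is exactly what makes Theorem~\ref{misha} applicable, since for an analytic $M$ the one-sided germ would extend analytically and no such $f$ could exist. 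Alternatively, one may simply invoke the non-extension results of Langenbruch~\cite{langenbruch}, Thilliez~\cite{thillieznonext}, and Nowak~\cite{nowakcounter}, from which the statement is extracted.
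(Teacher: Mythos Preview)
Your proposal is correct and follows essentially the same approach as the paper. The paper presents Theorem~\ref{nowak} as extracted from the cited references of Langenbruch, Thilliez, and Nowak, but remarks that it is reproved via the explicit construction of Theorem~\ref{misha}; your argument is exactly that reproof, and it is identical in structure to the paper's proof of Theorem~\ref{misha2} (with $N=\widetilde M$ and $K=\widehat M$), including the use of Lemma~\ref{WEP} to obtain a strictly larger quasi-analytic overring and the uniqueness step carried out inside that overring.
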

\smallskip

So Theorem \ref{nowak} asserts that \emph{any} quasi-analytic local ring $C^M_{1,0}$ containing $\oka$ fails the weak extension property, i.e.,
no matter how wide is the quasi-analytic class $C^{\widetilde M}_{1,0}$ we choose, there still exist germs in $C^{M}_{+,0},$ which do  not extend to germs from $C^{\widetilde{M}}_{1,0}.$

 In the last section, we reprove this result by a very explicit construction of non-extendible germs, which have useful additional properties that we plan to use in our further research. Our construction yields another result not implied in an evident way by Theorem \ref{nowak} that we need in the next section.
 Namely, the following is an easy consequence of Theorem \ref{misha}:

\begin{Theorem}\label{misha2}
Let $C^M_{1,0}, C^N_{1,0}$ be quasi-analytic Denjoy-Carleman   rings. Assume that $C^M_{1,0}$  properly contains $\oka_1.$
 Then there exists a quasi-analytic ring $C^K_{1,0}$ such that $C^N_{1,0} \subset C^{K}_{1,0}$, and
 $$\left(C^{M}_{+,0}\cap C^{K}_{1,0}|_{\R_+}
\right) \setminus C^N_{1,0}|_{\R_+} \neq \emptyset.$$
\end{Theorem}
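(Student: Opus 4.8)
The plan is to obtain the desired germ as the restriction to $\R_+$ of the function produced by Theorem~\ref{misha}, after first manufacturing the wider ring $C^K_{1,0}$ directly from $N$ by means of Lemma~\ref{WEP}. Write $K=\{k_j\}$ for the sequence to be built and let $\{N_j\}$ denote the defining sequence of $C^N$. First I would apply Lemma~\ref{WEP} to the quasi-analytic ring $C^N_{1,0}$: this yields a quasi-analytic ring $C^K_{1,0}$ with $C^N_{1,0}\subset C^K_{1,0}$ and $\lim_{j\to\infty}\sqrt[j]{k_j/N_j}=\infty$. This $C^K_{1,0}$ is exactly the ring required by the statement, and I stress that its construction uses only $N$ and is entirely independent of $M$; no comparability between $M$ and $K$ is needed.

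Next I would invoke Theorem~\ref{misha} with the given sequence $M$ as its first argument and with $K$ in the role of its second sequence $\widetilde M$. This produces a function $f$ which is (i) in $C^M[0,\infty)$, (ii) in $C^K(\R)$, (iii) analytic outside every neighborhood of $0$, and which (iv) satisfies the saturation estimate $|f^{(n_p)}(x_p)|\ge n_p!\,k_{n_p}$ along a sequence $x_p\to 0^-$ with $n_p\to\infty$. Setting $g=f|_{\R_+}$, property (i) gives $g\in C^M_{+,0}$ and property (ii) gives $g\in C^K_{1,0}|_{\R_+}$, so $g$ already lies in the intersection $C^M_{+,0}\cap C^K_{1,0}|_{\R_+}$; together with the containment $C^N_{1,0}\subset C^K_{1,0}$ from the first step, all the structural requirements on $K$ are met.

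The remaining and only substantive point is to verify that $g\notin C^N_{1,0}|_{\R_+}$, and here the quasi-analyticity does the work. Suppose for contradiction that $g=h|_{\R_+}$ for some germ $h\in C^N_{1,0}$. Since $f$ and $h$ are both $C^\infty$ and coincide on a right half-neighborhood of $0$, all their derivatives at $0$ agree, so $T_0f=T_0h$. Because $C^N_{1,0}\subset C^K_{1,0}$, both $f$ and $h$ are germs in the \emph{quasi-analytic} ring $C^K_{1,0}$, on which $T_0$ is injective; hence $f=h$ near $0$, and in particular $f\in C^N_{1,0}$. Choosing a representative with $|f^{(p)}(x)|\le p!\,AB^p N_p$ on a neighborhood $V$ of $0$ and evaluating at $x=x_p\in V$, $p=n_p$, the saturation estimate (iv) forces $n_p!\,k_{n_p}\le n_p!\,AB^{n_p}N_{n_p}$, i.e. $\sqrt[n_p]{k_{n_p}/N_{n_p}}\le A^{1/n_p}B$, which remains bounded as $p\to\infty$. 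This contradicts $\lim_{j}\sqrt[j]{k_j/N_j}=\infty$, so $g$ does not extend to any germ of $C^N_{1,0}$, proving the asserted non-emptiness.

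The main obstacle is conceptual rather than computational: one must recognize that the hypothesis ring $N$ should play the role of the \emph{narrow} target class that fails to admit the extension (the class called $K$ in the discussion immediately following Theorem~\ref{misha}), whereas the newly built ring $C^K_{1,0}$ plays the role of the \emph{wide} class $\widetilde M$ holding the unique extension $f$ — so the letter $K$ carries the opposite role here than in that discussion. Once this matching is correctly set up, the only quantitative ingredient is the final root comparison, which is routine, and the hypothesis $C^M_{1,0}\supsetneq\oka_1$ enters only through Theorem~\ref{misha}, where it guarantees that the constructed $f$ is genuinely non-analytic near $0$ so that estimate (iv) is attainable.
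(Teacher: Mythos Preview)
Your proof is correct and follows the same route as the paper: first build $K$ from $N$ via Lemma~\ref{WEP}, then apply Theorem~\ref{misha} with $\widetilde M=K$, and finally use quasi-analyticity of $C^K_{1,0}$ to argue that any $C^N$-extension of $f|_{\R_+}$ would force $f\in C^N_{1,0}$, contradicting the saturation estimate against $\lim_j\sqrt[j]{k_j/N_j}=\infty$. The only cosmetic difference is that the paper phrases the uniqueness step as ``the restriction map $C^K_{1,0}\to C^K_{1,0}|_{\R_+}$ is injective'', whereas you spell this out via $T_0f=T_0h$ and injectivity of $T_0$ on $C^K_{1,0}$; these are the same argument.
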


\begin{proof}
First, let us note that, due to Lemma \ref{WEP}, there exists  a sequence $K =\{k_p\},$ such that $C^K_{1,0}$ is a quasi-analytic ring, $C^N_{1,0} \subset C^{K}_{1,0}$ and
\begin{equation}\label{infty} \lim_ {j \to \infty} \left ( \frac{k_j}{n_j}\right )^{\frac{1}{j}}= \infty.\end{equation}

 Next, let $f$ be a function as in Theorem \ref{misha} for $\widetilde{M} = K$. Let us verify that $$f|_{\R_+} \in \left(C^{M}_{+,0}\cap C^{K}_{1,0}|_{\R_+}
\right) \setminus C^N_{1,0}|_{\R_+}.$$

According to Theorem \ref{misha}, $f\in C^{K}_{1,0},$ and $f|_{\R_+} \in C^{M}_{+,0}.$
Let us show that $f\notin  C_{1,0}^N,$ which will prove the Theorem, since the restriction map $C_{1,0}^K \rightarrow C_{1,0}^K|_{\R_+}$ is injective.
\medskip

Assume the opposite, $f\in C_{1,0}^N.$ Then there exist $A, B > 0$  such that $ |f^{(p)}(x)|\leq p! A B^p n_p $ for all $p\in \N$ and for all points $x$ in a neighborhood of $0.$

By Theorem \ref{misha} we   have
$$\left| f^{(p_j)}(x_{j})\right|   \geq  p_j!\,k_{p_j}.$$

Therefore we get
$$\forall\, j\quad  p_j!\,k_{p_j} \leq p_j!\,A B^{p_j}  n_{p_j}.$$
 So for any $j$
$$ \sqrt[p_j]{\frac{k_{p_j}}{n_{p_j}}}\leq A^{\frac{1}{p_j}} B,$$
which contradicts (\ref{infty}).
\end{proof}

\smallskip

\section{A Strong Failure of the Weierstrass Preparation Theorem}

\begin{Theorem}
\label{WPT} For any quasi-analytic local ring $C^M_{n,0},\, \, n\ge 2,$ and
 for any wider local quasi-analytic ring $C^{\widetilde{M}}_{n,0}$ there exists $g\in C^M_{n,0}$, regular of order 2, which cannot be prepared in $C^{\widetilde{M}}_{n,0}$.
\end{Theorem}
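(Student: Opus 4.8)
The plan is to reduce to $n=2$ and then exhibit a single explicit $g\in C^M_{2,0}$ whose Weierstrass polynomial is forced to have a coefficient equal to a non-extendable germ, which is impossible.

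First I would reduce to $n=2$. If $g_0\in C^M_{2,0}$, regular of order $2$ in the last variable, cannot be prepared in $C^{\widetilde M}_{2,0}$, then $g(x_1,\dots ,x_n):=g_0(x_1,x_n)$ lies in $C^M_{n,0}$ and is regular of order $2$ in $x_n$. A preparation $g=u\,\varphi$ in $C^{\widetilde M}_{n,0}$ would restrict, upon substituting $x_2=\dots =x_{n-1}=0$, to a preparation of $g_0$ in $C^{\widetilde M}_{2,0}$: the germ $u$ stays a unit since $u(0)\neq 0$, and $\varphi$ stays a distinguished Weierstrass polynomial. So it suffices to treat $n=2$; write the variables as $(x,y)$. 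To get the one-variable input I would apply Theorem~\ref{misha2} with $N=\widetilde M$, obtaining a germ $\phi\in\bigl(C^M_{+,0}\cap C^K_{1,0}|_{\R_+}\bigr)\setminus C^{\widetilde M}_{1,0}|_{\R_+}$; in particular $\phi|_{\R_+}$ is of class $M$ but is \emph{not} the restriction of any class-$\widetilde M$ germ. After subtracting $\phi(0)$ and adding a suitable linear term (both operations respect $C^M_{+,0}$, and neither can return $\phi$ to $C^{\widetilde M}_{1,0}|_{\R_+}$ because $\oka_1\subset C^{\widetilde M}_{1,0}$), I may assume $\phi(0)=0$ and $\phi'(0)=c>0$, so $\phi(x)>0$ for small $x>0$. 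Let $\chi=\phi^{-1}$ be the one-sided inverse germ, with $\chi(0)=0$ and $\chi'(0)=1/c$.

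The proposed counterexample is
\[
g(x,y)\;=\;x-\chi(y^{2}).
\]
Because $y^{2}\ge 0$, this germ only ever evaluates $\chi$ on $\R_+$, where $\chi$ is of class $M$; the point is that $g$ then lands in $C^M_{2,0}$ even though $\phi$ does not extend across $0$. Since $\chi(y^2)=\tfrac1c y^2+O(y^4)$, we get $g(0,y)=-\tfrac1c y^2+O(y^4)$, so $g$ is regular of order $2$ in $y$. Now suppose $g$ could be prepared in $C^{\widetilde M}_{2,0}$, say $g=u\,\varphi$ with $\varphi=y^2+a_1(x)y+a_2(x)$ and $a_1,a_2\in C^{\widetilde M}_{1,0}$. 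For each small $x>0$ the equation $g(x,y)=0$ reads $\chi(y^2)=x$, i.e. $y^2=\phi(x)>0$, so $g(x,\cdot)$ has exactly the two small simple roots $\pm\sqrt{\phi(x)}$; as $u(x,\cdot)$ does not vanish near $y=0$, these are precisely the roots of the monic polynomial $\varphi(x,\cdot)$. Hence $\varphi(x,y)=y^2-\phi(x)$ for all small $x>0$, that is, $a_1(x)=0$ and $a_2(x)=-\phi(x)$ on $\R_+$. But then $-a_2\in C^{\widetilde M}_{1,0}$ restricts on $\R_+$ to $\phi$, contradicting $\phi|_{\R_+}\notin C^{\widetilde M}_{1,0}|_{\R_+}$. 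Therefore no preparation exists.

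The algebraic core above is clean; the step I expect to be the real work is verifying that $g=x-\chi(y^2)$ genuinely belongs to $C^M_{2,0}$. This needs two technical ingredients. First, a one-sided inverse function theorem in $C^M_{+,0}$, producing $\chi\in C^M_{+,0}$ from $\phi\in C^M_{+,0}$ with $\phi'(0)\neq0$ (the statement in Proposition~\ref{DC} is for two-sided germs at $0$). Second, a Faà di Bruno estimate showing that composing a half-line class-$M$ germ with the analytic map $y\mapsto y^{2}$ yields a full class-$M$ germ: the chain-rule terms for $\dfrac{d^{k}}{dy^{k}}\chi(y^{2})$ involve only $\chi^{(m)}(y^{2})$ with $m\le k$ times bounded powers of $y$, so the bounds $|\chi^{(m)}|\le m!\,AB^{m}m_{m}$ propagate, using $m_{m}\le m_{k}$, to the bounds required of $g$. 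The conceptual point — presumably what Bierstone's suggestion streamlines — is exactly that placing the bad germ in the $y^{2}$-slot keeps $g$ inside $C^M$, while the square root implicit in solving $\varphi=0$ forces the forbidden germ $\phi$ back out as a Weierstrass coefficient, which $C^{\widetilde M}_{1,0}$ cannot accommodate.
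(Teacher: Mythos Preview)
Your argument is correct and is essentially the paper's, with one avoidable twist. You first invert $\phi$ to get $\chi\in C^M_{+,0}$ and set $g(x,y)=x-\chi(y^{2})$, so that the Weierstrass coefficient comes out directly as $-\phi$. The paper instead takes $g(t,x)=f(t^{2})-x$ with $f$ the non-extendable germ itself; the formal Weierstrass factor is then $t^{2}-a(x)$ with $a=(T_{0}f)^{-1}$, and if preparation held in $C^{\widetilde M}_{2,0}$ the germ realizing $a$ would lie in $C^{\widetilde M}_{1,0}$, whence the \emph{two-sided} Inverse Function Theorem there (Proposition~\ref{DC}) forces $f\in C^{\widetilde M}_{1,0}$, a contradiction. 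The point is that the inversion is pushed to the target ring $C^{\widetilde M}_{1,0}$, where the IFT is already in the toolkit, so the one-sided Inverse Function Theorem in $C^M_{+,0}$ that you rightly flag as needing separate proof is never invoked. Your root-counting identification of $\varphi$ for $x>0$ and the paper's identification via uniqueness of Weierstrass preparation in $\F_{2}$ together with the parity $g(-t,x)=g(t,x)$ are interchangeable; the composition step ``$f(t^{2})\in C^M_{2,0}$ because $t^{2}\ge 0$'' via Fa\`a di Bruno using only the half-line bounds is common to both proofs and is exactly what you sketch.
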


\begin{proof} It suffices to consider the case of dimension 2.
\smallskip

 By  Theorem \ref{misha2}, there exist a quasi-analytic ring $C^K_{1,0}$ and a function $f$ on a neighborhood $U\subset \R$ of $0$ such that
   $$f|_{U\cap \R_+} \in C^M(U\cap \R_+),\, \, f \in C^K(U), \text { but } f\notin C^{\widetilde{M}}(U).$$
\smallskip

We can assume  that  $f(0) = 0$ and $f'(0) > 0.$ If this
  is not the case,  it suffices to take $f(x) + kx  + b,$ which is in the
  same  class  as $f$  and  has  the  desired property  for  suitable
  constants  $k$ and $b.$
  
   Define $g(t,x)  =  f(t^2)-x.$ Since  $t^2 \geq  0$, by Proposition \ref{DC},
  $g(t,x)$ is a $C^M_{2,0}$ germ. Since  $f'(0)\ne 0,$  $g(t,x)$ is regular of order
  2 with respect to the variable $t.$ Let us show that $g$ \emph{cannot be prepared} in $C^{\widetilde{M}}_{2,0}.$ In the ring of formal power series $\F_2,$ there exist a second degree Weierstrass polynomial $P(t, x)$ and a unit $Q(t,x)$ such that the Taylor expansion $T_0 \, g$ of g at $0$ can be represented as $T_0 \, g(t,x)=P(t,x)Q(t,x),$ and this representation is unique. Since $g(t,x)=g(-t,x)$ and the Taylor morphism taking a germ to its Taylor expansion at $0$ is injective by quasi-analyticity, $P$ does not have degree $1$ terms in $t,$ so $P(t,x)=t^2-a(x).$ Setting $T_0 \, g(t,x)=T_0 \, f(t^2) -x =0$  and plugging in $x=T_0 \, f(t^2),$ one obtains $t^2=a(x),$ which implies $x= T_0 \, f(a(x)).$ Hence, as a formal series, $a=(T_0 \, f)^{-1}.$ The inverse series of the Taylor series of a function in some quasi-analytic class is the Taylor series of a function in the same class. Therefore, if $g$ could be prepared in $C^{\widetilde{M}}_{2,0},$ then $P(t,x)$ would have to be the formal power series of an element in $C^{\widetilde{M}}_{2,0},$ i.e. $a(x)$ would have to correspond to an element of $C^{\widetilde{M}}_{1,0},$ which is impossible because $ f\notin C^{\widetilde{M}}(U).$ 
\end{proof}

\section{An Explicit  Non-Extendable Function}\label{nonext}

\parindent = 0pt
In this section, we prove Theorem  \ref{misha} by constructing an explicit example of a function in question.

\begin{proof} Our bricks for constructing the function $f$ will be functions  $$\displaystyle g_n= \frac{A_n}{2^n(z_n-x)},$$ where  $z_n=x_n+iy_n$ with $\displaystyle x_n<0,$ $\displaystyle y_n>0,$ $\displaystyle \lim_{n\to \infty} x_n=\lim_{n\to \infty} y_n=0 ,$  and  $ {A_n}$ suitably chosen.
\medskip

To select appropriate $A_n$'s and $x_n$'s, consider functions $\varphi (\xi)$ and $\widetilde\varphi (\xi)$ related to one of the classic ways of gauging the growth of sequences $\{ m_t\}$ and  $\{ \widetilde{m}_t\}:$

$$\varphi(\xi) =\sup_{t>0} \frac{\xi^{t+1}}{{m}_t} $$

and

$$\widetilde\varphi(\xi) =\sup_{t>0} \frac{\xi^{t+1}}{\widetilde{m}_t}.$$

$\varphi (\xi)$ and $\widetilde\varphi (\xi)$ satisfy the following:

\begin{itemize}

\item They are increasing for $\xi >0$ since each is the supremum of monotonically increasing functions.
\item For every $\xi >0,$ we get $ \varphi(\xi)< \infty$, $\displaystyle \lim_{\xi\to \infty}\varphi(\xi)=\infty,$   $\xi^{t+1}\leq \varphi(\xi)\cdot  m_t$ and similarly $ \widetilde\varphi(\xi)< \infty$, $\displaystyle \lim_{\xi\to \infty}\widetilde\varphi(\xi)=\infty,$  $\xi^{t+1}\leq \widetilde \varphi(\xi)\cdot \widetilde m_t.$
\item Both are continuous, which combined with monotonicity implies they are increasing bijections of $(0, +\infty).$
\end{itemize}

All properties but continuity are easy to see. We establish the latter in Step 1 for $\widetilde\varphi (\xi),$ and obviously, the same argument applies for $\varphi(\xi).$

{\bf Step 1.} {\sl Call  $\displaystyle b_n=\frac {\widetilde m_n}{\widetilde m_{n-1}}$. Then for all $z\in [b_n,b_{n+1}],$ one has
$$\displaystyle \widetilde {m}_n = \sup_{\xi>0} \frac {\xi^{n+1}}{\widetilde\varphi(\xi)} = \frac{z^{n+1}}{\widetilde\varphi(z)}.$$ Furthermore, $\widetilde\varphi (\xi)$ is continuous.}

\begin{proof}

\begin{itemize}
\item [a)]$\displaystyle \forall\, j\leq n\quad \frac{z^{j-1}}{\widetilde m_{j-1}}\leq \frac{z^{j}}{\widetilde m_{j}}.$

Indeed, $\displaystyle \frac{z^{j}}{\widetilde m_{j}}= \frac{z^{j-1}}{\widetilde m_{j-1}}\cdot \frac{z}{b_{j}}\geq \frac{z^{j-1}}{\widetilde m_{j-1}}$
since $z\geq b_n\geq b_j$.

\item [b)] $\displaystyle \forall\,  j \geq n \quad \frac{z^{j+1}}{\widetilde m_{j+1}}\leq \frac{z^{j}}{\widetilde m_{j}}$.\

Indeed,
$ \displaystyle\frac{z^{j+1}}{\widetilde m_{j+1}}= \frac{z^{j}}{\widetilde m_{j}}\cdot \frac{z}{b_{j+1}}\leq \frac{z^{j}}{\widetilde m_{j}}$
since $ b_{j+1}\geq b_{n+1}\geq z.$
\end{itemize}

Thus
$$\displaystyle \forall\, z\in [b_n,b_{n+1}]\quad  \widetilde\varphi(z)= \sup_{k>0}\frac{z^{k+1}}{\widetilde m_{k}}=\frac{z^{n+1}}{\widetilde m_{n}}.$$ Therefore, $\widetilde\varphi$ is continuous.

From the above, it follows that
$$ \frac {z^{n+1}}{\widetilde\varphi(z)}= \widetilde m_n \geq \sup_{\xi>0} \frac{\xi^{n+1}}{\widetilde\varphi(\xi)}\geq \frac {z^{n+1}}{\widetilde\varphi(z)}$$
because $\xi^{t+1}\leq\widetilde\varphi(\xi)\widetilde m_t$ for all $\xi>0, \, t>0.$

Consequently, these inequalities are equalities.
\end{proof}

{\bf Step 2.} {\sl We can choose $A_n$ in such a way that $\displaystyle \left| g_n^{(p)}(x)\right|\leq p!\,\frac{1}{2^n}\,\widetilde m_p$ for every $x \in \R.$ 
}
\begin{proof}

For any $x\in \R,$ we have
$$\left|\frac{A_n}{(z_n-x)^{p+1}}\right| \leq \frac{|A_n|} {|y_n  |^{p+1}}$$
Taking $\displaystyle A_n= \frac{1}{\widetilde\varphi(y_n^{-1})}$ yields

\begin{equation}
\label{MtildeExpr} \left| g_n^{(p)}(x)\right|= p!\,\frac{A_n}{2^n|z_n-x|^{p+1}}\leq  p! \frac{1}{2^n} \frac {(y_n^{-1})^{p+1}}{\widetilde \varphi(y_n^{-1})}\leq  p!\,\frac{1}{2^n}\,\widetilde m_p
\end{equation}
since $\xi^{p+1}\leq \widetilde \varphi(\xi)\cdot \widetilde m_p.$

\end{proof}

{\bf Step 3.} {\sl A suitable choice of  $\{x_n\}$ implies  $\displaystyle \left| g_n^{(p)}(x)\right|\leq p! \frac{1}{2^n}\, m_p$ for every $x \in [0, \infty).$}
\begin{proof}
Observe that for $x\geq0,$ we have
$$\left|\frac{A_n}{(z_n-x)^{p+1}}\right| \leq \frac{|A_n|} {|x_n  |^{p+1}}=\frac {|x_n ^{-1} |^{p+1}}{\widetilde\varphi (y_n^{-1})}.$$

Since both $\varphi(\xi)$ and $\widetilde\varphi(\xi)$ are continuous bijections on $(0, +\infty),$ we can take $\overline x_n$ such that

$$\varphi(\overline {x}_n^{\,-1})= \widetilde \varphi(y_n^{-1}).$$
Letting $x_n = -\overline x_n,$ we obtain

\begin{multline}
$$\displaystyle \left| g_n^{(p)}(x)\right|\leq  p! \frac{1}{2^n}\cdot\frac{|A_n|} {|x_n  |^{p+1}}=p! \frac{1}{2^n}\cdot\frac {|x_n ^{-1} |^{p+1}}{\widetilde\varphi (y_n^{-1})}=p!\frac{1}{2^n}\frac {|x_n ^{-1} |^{p+1}}{\varphi (\left|x_n\right|^{-1})}\leq p! \frac{1}{2^n}\, m_p.$$ \label{MExpr}
\end{multline}
\end{proof}

In order to have a function  $f$ verifying property (3), we have to extract a suitable subsequence $\{g_{n_j}\}\subset \{g_{n}\}$ and we must be careful in choosing the sequence $\{y_n\}$.

Let  $\{n_j\}$ any subsequence of $\N.$ We shall estimate the  $n_j^{th}$ derivative of the function $\displaystyle S= \sum_t g_{n_t}$ at the point $x_ {n_j}$ as follows:
\begin{multline}
$$
\label{Sjexpr}
\left|S^{(n_j)}(x_{n_j})\right|=\left|g_{n_j}^{(n_j)}(x_{n_j}) +\sum_{t\neq j}g_{n_t}^{(n_j)}(x_{n_j})\right|\\\geq \left| \left|g_{n_j}^{(n_j)}(x_{n_j})\right|- \sum_{t\neq j}\left|g_{n_t}^{(n_j)}(x_{n_j})\right|\right|.$$
\end{multline}

On the right side  of the inequality above, there are three terms, namely
\begin{enumerate}
\item [a)] $\displaystyle \left|g_{n_j}^{(n_j)}(x_{n_j})\right|= n_j! \frac {(y_{n_j}^{-1})^{n_j+1}}{2^{n_j}\widetilde\varphi(y_{n_j}^{-1})}$
\item [b)] $\displaystyle \sum_{t< j}\left|g_{n_t}^{(n_j)}(x_{n_j})\right|$
\item [c)] $\displaystyle \sum_{t> j}\left|g_{n_t}^{(n_j)}(x_{n_j})\right|$
\end{enumerate}

We shall evaluate them in subsequent steps.

{\bf Step 4.} {\sl Choosing $y_n^{-1}\in [b_n, b_{n+1}],$ we have}
$$|g_n^{(n)}(x_n)| = 2^{-n} n!\,\widetilde m_n \quad \forall n\in \N$$

\begin{proof}
$\displaystyle |g_n^{(n)}(x_n)|= n!\,\frac {(y_{n}^{-1})^{n+1}}{2^{n}\widetilde\varphi(y_{n}^{-1})}=2^{-n} n!\,\widetilde m_n. $

\end{proof}

{\bf Step 5.} {\sl $\forall\, l\  \exists\, N>l \mbox{ such that }  \forall\, t\leq l\ \mbox {and}\  \forall\,n>N$\,
 the following inequality holds}
$$\frac{(y_t^{-1})^{n+1}}{\widetilde\varphi(y_t^{-1})} <2^{-(n+2)}\widetilde m_n$$

\begin{proof}
$$\frac{(y_t^{-1})^{n+1}}{\widetilde\varphi(y_t^{-1})}= \frac{(y_t^{-1})^{t+1}}{\widetilde\varphi(y_t^{-1})}\cdot (y_t^{-1})^{n-t}= \widetilde m_t\,(y_t^{-1})^{n-t}=\widetilde m_n \,\frac{y_t^{-1}}{b_{t+1}}\cdot\frac{y_t^{-1}}{b_{t+2}}\cdots\frac{y_t^{-1}}{b_n}\leq \widetilde m_n \,2^{-(n+2)}.$$
The second equality comes directly from  Step 1.

For the last inequality, take $N'$ such that $b_{N'}>4 b_{t+1}$. Note that with this assumption the factors up to index $N'-1$ are all bounded above by $1$, while the rest are bounded above by $1/4.$ Since there are  $n-N'+1$ of the latter factors, we obtain the upper bound $(1/4)^{n-N'+1}$.

Choosing $n>N=2N',$ we obtain that $N' < n/2,$ and

$$\displaystyle\frac{1}{4^{n-N'+1}}<\frac{1}{4^{(n/2) + 1}}= \frac {1}{2^{n+2}}.$$
\end{proof}

{\bf Step 6.} {\sl
  $$
|S^{(n_j)}(x_{n_j})|\geq n_j!\ \big [2^{-n_j}\widetilde m_{n_j}-\max_{t<j} \frac {(y_{n_t}^{-1})^{n_j+1}}{\widetilde\varphi(y_{n_t}^{-1})} - 2^{1-n_{j+1}} \widetilde m_{n_j}\big ]
$$}
\begin{proof}

From equation \eqref{Sjexpr}, we have

$$
\left|S^{(n_j)}(x_{n_j})\right|\geq  \left|g_{n_j}^{(n_j)}(x_{n_j})\right|- \sum_{t\neq j}\left|g_{n_t}^{(n_j)}(x_{n_j})\right| \geq $$

$$\geq  \left|g_{n_j}^{(n_j)}(x_{n_j})\right|- \sum_{t< j}\left|g_{n_t}^{(n_j)}(x_{n_j})\right|- \sum_{t> j}\left|g_{n_t}^{(n_j)}(x_{n_j})\right|\geq $$

$$\geq n_j!\,\left[\frac {(y_{n_j}^{-1})^{n_j+1}}{2^{n_j}\widetilde\varphi(y_{n_j}^{-1})} - \max_{t<j} \frac {(y_{n_t}^{-1})^{n_j+1}}{\widetilde\varphi(y_{n_t}^{-1})} - 2^{1-n_{j+1}}\max_{\xi>0}\frac{\xi^{n_j+1}}{\widetilde\varphi(\xi)}\right]= $$

$$=  n_j!\,\left[2^{-n_j}\widetilde m_{n_j} -\max_{t<j} \frac {(y_{n_t}^{-1})^{n_j+1}}{\widetilde\varphi(y_{n_t}^{-1})} - 2^{1-n_{j+1}}\widetilde m_{n_j}\right]$$
\end{proof}

{\bf Step 7}. Finally, we define the sequence $\{n_j\}$ recursively.
Suppose we have already defined $n_{j-1}.$ Define $n_{j}$ as $N$  in Step 5 with\,   $l= n_{j-1},$ \, also making sure that $$ n_{j}- n_{j-1}\geq 3.$$

{\bf Step 8}.
Now we have
$$\displaystyle
  \max_{t<j} \frac {(y_{n_t}^{-1})^{n_j+1}}{\widetilde\varphi(y_{n_t}^{-1})}\leq 2^{-(n_j+2)}\widetilde m_{n_j}$$ and $$\displaystyle 2^{1-n_{j+1}}\widetilde m_{n_j}\leq 2^{-2-n_j}\widetilde m_{n_j}.$$
Putting it all together, we obtain the inequality
$$\left| S^{(n_j)}(x_{n_j})\right| \geq n_j!2^{-n_j}\left( 1 -\frac{1}{4}-\frac{1}{4}\right)\widetilde m_{n_j}= \frac{1}{2}\, n_j!\,2^{-n_j}\widetilde m_{n_j}$$ for all $j.$

\medskip

Since our construction always gives  $$\lim_{n\rightarrow \infty} A_n = 0 \text{ and }
\lim_{n\rightarrow \infty} x_n = 0,$$ we have uniform convergence of the series $\sum g_n(x)$
 on  a complex neighborhood of any set $\R\setminus (-\varepsilon, \varepsilon),$ which guarantees
 the analyticity of $f (x) = 2S(2x)$ on any
such set. Note that Equation~\eqref{MtildeExpr} shows $f \in C^{\widetilde M}(\R),$ Equation~\eqref{MExpr} gives $f \in C^{M}[0,\infty),$ and the conclusion of Step 8 establishes property (3). Therefore, $f$ satisfies all properties in Theorem \ref{misha}. 
\end{proof}

\bibliographystyle{plain}
\bibliography{NonWPT}

\end{document}